\newtheorem*{acknowledgement}{Acknowledgement}
\newtheorem{lemma}{Lemma}
\newtheorem{proposition}{Proposition}
\newtheorem{theorem}{Theorem}
\numberwithin{equation}{section}
\title[GQY manifolds]{Generalized quasi Yamabe gradient Solitons}
\author{Benedito Leandro Neto and Hudson Pina de Oliveira}
\address{Universidade de Federal do Oeste da Bahia, Campus Lu\'is Eduardo Magalh\~aes, Rua Itabuna 1278 Sta. Cruz, CEP 47850000, Bahia, Brazil.}
\email{bleandroneto@gmail.com}
\address{Universidade de Federal de Mato Grosso, campus Araguaia, Avenida Valdron Varjão, 6390, Barra do Garças-MT, CEP 78600-000, Mato Grosso, Brazil.}
\email{hudsonmat@hotmail.com}
\numberwithin{equation}{section}
\numberwithin{theorem}{section}
\keywords{Locally conformally flat, quasi Yamabe gradient solitons, Weyl curvature tensor} \subjclass[2010]{53C21, 53C25}
\date{February 3, 2016}
\begin{document}

\newcommand{\spacing}[1]{\renewcommand{\baselinestretch}{#1}\large\normalsize}
\spacing{1.2}

\maketitle

 \begin{abstract}
We prove that a nontrivial complete generalized quasi Yamabe gradient soliton $(M^{n}, g)$ must be a quasi Yamabe gradient soliton on each connected component of $M$ and that a nontrivial complete locally conformally flat generalized quasi Yamabe gradient soliton has a special warped product structure.
\end{abstract}

\section{Introduction}

A complete Riemannian manifold $(M^{n}, g)$, $n \geq 3$, is a {\it generalized quasi-Einstein manifold}, if there exist three smooth functions $f, \mu$ and $\beta$ on $M$ such that
\begin{eqnarray*}
Ric + \nabla^{2}f-\mu df\otimes df = \beta g,
\end{eqnarray*}
where $Ric$ and $\nabla^{2}$ denotes, respectively, the Ricci tensor and Hessian of the metric $g$. This concept, introduced by Catino in \cite{Catino}, generalizes the $m$-quasi-Einstein manifolds (see, for instance \cite{BarrosRibeiro,Leandro1}). Inspired by \cite{Catino}, we will introduce a class of Riemannian manifolds (see \cite{CMMR}).

A complete Riemannian manifold $(M^n, g)$, $n \geq 3$, is a {\it generalized quasi Yamabe gradient soliton} (GQY manifold), if there exist a constant $\lambda$ and two smooth functions, $f$ and $\mu$, on $M$, such that
\begin{eqnarray}\label{eq0}
(R-\lambda)g=\nabla^{2}f-\mu df\otimes df
\end{eqnarray}
where $R$ denotes the scalar curvature of the metric $g$ and $df$ is the dual $1$-form of $\nabla f$.
In a local coordinates system, we have
\begin{eqnarray}\label{eq1}
(R-\lambda)g_{ij}=\nabla_{i}\nabla_{j}f-\mu\nabla_{i}f\nabla_{j}f.
\end{eqnarray}
When $f$ is a constant function, we say that $(M^n, g)$ is a {\it trivial} generalized quasi Yamabe graient soliton. Otherwise, it will be called {\it nontrivial}.

Let us point out that if $\mu=0$,  (\ref{eq0}) becomes the fundamental equation of gradient Yamabe soliton. For $\lambda=0$ the Yamabe soliton is {{\it steady}, for $\lambda<0$ is {\it expanding} and for $\lambda>0$ is {\it shrinking}. Daskalopoulos and Sesum \cite{DaskalopoulosSesum} proved that locally conformally flat gradient Yamabe solitons with positive sectional curvature are rotationally symmetric. Then in \cite{CSZ}, they proved that a gradient Yamabe soliton admits a warped product structure whitout any additional hypothesis. They also proved that a locally conformally flat gradient Yamabe solitons has a more special warped product structure. Inspired by the Generalized quasi-Einstein metrics (see \cite{Catino, Leandro1}), they started to consider the quasi Yamabe gradient solitons (see \cite{HL,Leandro,Wang}). In \cite{HL}, they introduced the concept of quasi Yamabe gradient soliton and showed that locally conformally flat quasi Yamabe gradient solitons with positive sectional curvature are rotationally symmetric. Moreover, they proved that a compact quasi Yamabe gradient soliton has constant scalar curvature. Leandro \cite{Leandro} investigated the quasi Yamabe gradient solitons on four-dimensional case and proved that half locally conformally flat quasi Yamabe gradient solitons with positive sectional curvature are rotationally symmetric. And he proved that half locally conformally flat gradient Yamabe solitons admit the same warped product structure proved in \cite{CSZ}. Wang \cite{Wang} gave several estimates for the scalar curvature and the potential function of the quasi Yamabe gradient solitons. He also proved that a quasi Yamabe gradient solitons carries a warped product structure. In \cite{CMMR}, they define and study the geometry of gradient Einstein-type manifolds. This metric generalizes the GQY manifolds.

In this paper, we first prove the following result. 

\begin{theorem}\label{theorem3}
Let $(M^{n},\,g)$, $n\geq3$, be a nontrivial complete generalized quasi Yamabe gradient soliton satisfying (\ref{eq0}). Then, $\mu$ must be constant on each connected component of $M$.
\end{theorem}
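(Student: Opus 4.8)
Throughout set $\phi:=R-\lambda$, so that (\ref{eq1}) reads $\nabla_i\nabla_j f=\phi\,g_{ij}+\mu\,\nabla_i f\,\nabla_j f$. The plan is to differentiate this once, commute the resulting third derivatives with the Ricci identity, and then squeeze out of the curvature identity enough contractions to control $\nabla\mu$. Applying $\nabla_k$ to (\ref{eq1}), using (\ref{eq1}) again to rewrite $\nabla_k\nabla_i f$ and $\nabla_k\nabla_j f$, skew-symmetrizing in $k,i$, and invoking $\nabla_k\nabla_i\nabla_j f-\nabla_i\nabla_k\nabla_j f=R_{kijl}\nabla^l f$, the terms cubic in $\nabla f$ are symmetric in $k,i$ and cancel, leaving the key identity
\[
R_{kijl}\nabla^l f=\Psi_k g_{ij}-\Psi_i g_{kj}+\big(\nabla_k\mu\,\nabla_i f-\nabla_i\mu\,\nabla_k f\big)\nabla_j f ,\qquad \Psi_k:=\nabla_k\phi-\mu\phi\,\nabla_k f .
\]

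Next I would contract this identity two ways. Tracing over $i,j$ produces a formula $Ric(\nabla f)=-(n-1)\Psi-|\nabla f|^2\nabla\mu+\langle\nabla\mu,\nabla f\rangle\nabla f$, while contracting with $\nabla^j f$ kills the left side by the antisymmetry of $R$ in its last two indices and forces the one-form $\Psi+|\nabla f|^2\nabla\mu$ to be proportional to $\nabla f$ on the open set $U:=\{\nabla f\ne 0\}$. Eliminating $\Psi$ between the last two statements gives, on $U$,
\[
(n-2)\,|\nabla f|^2\,\nabla\mu=(\text{smooth function})\cdot\nabla f+Ric(\nabla f).
\]
Since $n\ge 3$, the problem is thereby reduced to understanding $Ric(\nabla f)$, and this I expect to be the main obstacle: it suffices to show $Ric(\nabla f)$ is proportional to $\nabla f$ — equivalently, combining the two contracted identities with $\nabla R=2\,\mathrm{div}\,Ric$, that $R$ is constant on the regular level sets of $f$.

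To obtain the constancy of $R$ on the level sets I would differentiate the key identity a second time and feed in the second Bianchi identity contracted with $\nabla^l f$; after using (\ref{eq1}) to absorb the terms of the shape $R_{kijl}\nabla_m\nabla^l f$ and observing that the purely Riemannian cyclic sum cancels, one is left with a first-order relation among $\nabla\Psi$, $\nabla^2\mu$, $\nabla\mu$ and $\nabla f$ whose component orthogonal to $\nabla f$ should force $\langle\nabla\mu,X\rangle=\langle\nabla\phi,X\rangle=0$ for every $X\perp\nabla f$. Granting this, $\nabla\mu$ and $\nabla\phi$ are proportional to $\nabla f$ on $U$, so $\mu$ (and $R$) are constant on each connected level set of $f$.

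It then remains to rule out a nonzero radial derivative $\langle\nabla\mu,\nabla f\rangle$, and here the completeness hypothesis should be essential. On a connected component, $U$ is dense, its level-set foliation has totally umbilic leaves along which $|\nabla f|$ is constant, so $(M,g)$ is locally a warped product along $\nabla f$; inserting the warped-product forms of (\ref{eq1}) and of $R$ into the relation above should yield an overdetermined ordinary differential equation along the $\nabla f$-flow that forces $\langle\nabla\mu,\nabla f\rangle\equiv 0$, hence $\nabla\mu\equiv 0$ on $U$, and then $\mu$ is constant on the whole component by density and by continuity across the critical points of $f$. The two genuinely delicate points of the argument are (a) extracting the constancy of $R$ and $\mu$ on the level sets from the twice-differentiated identity, and (b) excluding a nonzero radial derivative of $\mu$; both are precisely where completeness must be used.
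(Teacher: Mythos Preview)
Your key identity
\[
R_{kijl}\nabla^l f=\Psi_k g_{ij}-\Psi_i g_{kj}+\big(\nabla_k\mu\,\nabla_i f-\nabla_i\mu\,\nabla_k f\big)\nabla_j f
\]
and its two contractions are correct and are exactly the starting point of the paper. The gap is in what comes next. You reduce the problem to showing $Ric(\nabla f)\parallel\nabla f$ (your point (a)) and then to ruling out a radial derivative $\langle\nabla\mu,\nabla f\rangle$ via an overdetermined ODE plus completeness (your point (b)). Point (b) is where the proposal breaks: in the warped product picture with $f=f(r)$ and $\mu=\mu(r)$ the soliton equation together with the scalar curvature formula gives only two ODEs for the three unknowns $f,\mu,\phi$, so the system is underdetermined, not overdetermined, and there is no evident reason why completeness should force $\mu'\equiv 0$. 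In fact completeness is \emph{not} used in the paper's proof at all; the result is purely local.

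What you are missing is one more algebraic trace, and it already lives at the first-order level of your identity --- no second differentiation or second Bianchi is needed. Insert the Weyl decomposition of $R_{kijl}$ into your key identity and substitute your formula for $R_{il}\nabla^l f$ (your ``trace over $i,j$'') into the Ricci pieces. One obtains
\[
W_{ijkl}\nabla^l f \;=\; D_{ijk} \;+\; E_{ijk},
\]
where $D$ is the Cao--Chen tensor (built from $Ric$, $R$, $\nabla f$) and
\[
E_{ijk}=\tfrac{n-2}{n-1}\big(\nabla_i\mu\,\nabla_j f-\nabla_j\mu\,\nabla_i f\big)\nabla_k f-\tfrac{|\nabla f|^2}{n-1}\big(\nabla_i\mu\,g_{jk}-\nabla_j\mu\,g_{ik}\big).
\]
Both $W$ and $D$ are trace-free in the pair $(j,k)$ \emph{by construction}, so $g^{jk}E_{ijk}=0$, which is the single pointwise constraint
\[
|\nabla f|^{2}\,\nabla\mu+(n-2)\,\langle\nabla\mu,\nabla f\rangle\,\nabla f=0.
\]
Taking the inner product with $\nabla\mu$ gives $|\nabla f|^{2}|\nabla\mu|^{2}+(n-2)\langle\nabla\mu,\nabla f\rangle^{2}=0$, a sum of two nonnegative terms (here $n\ge 3$), so $\nabla\mu=0$ wherever $\nabla f\neq 0$. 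This disposes of both your ``delicate points'' (a) and (b) simultaneously: contracting the displayed identity with $\nabla f$ already gives $(n-1)|\nabla f|^2\langle\nabla\mu,\nabla f\rangle=0$, so the radial derivative vanishes pointwise, with no appeal to completeness or to any ODE on a warped product.
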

Catino, Mastrolia, Monticella and Rigoli \cite{CMMR} showed that a complete generalized quasi Yamabe gradient soliton $(M^{n}, g)$ has a warped product structure without any hypothesis over $g$ (we recommend Theorem 5.1 on \cite{CMMR} to reader (see also \cite{CSZ})). 

As a consequence of Theorem \ref{theorem3}, we have

\begin{theorem}\cite{HL}
Let $(M^{n},\,g)$, $n\geq3$, be a nontrivial complete connected generalized quasi Yamabe gradient soliton satisfying (\ref{eq0}) with positive sectional curvature. Then
\begin{itemize}
\item[(a)] if $n=3$, $(M^{n}, g)$ is rotationally symmetric;
\item[(b)] if $n\geq 5$ and $W=0$,  $(M^{n}, g)$ is rotationally symmetric.
\end{itemize}
\end{theorem}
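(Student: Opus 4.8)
The plan is to deduce this statement directly from Theorem \ref{theorem3} together with the known structure theory for quasi Yamabe gradient solitons. First, since $M$ is connected, Theorem \ref{theorem3} forces the function $\mu$ in (\ref{eq0}) to be a genuine constant; with $\mu$ constant, (\ref{eq0}) is exactly the defining equation of a quasi Yamabe gradient soliton with soliton parameter $\mu$. Thus $(M^{n},g)$ is a complete quasi Yamabe gradient soliton of positive sectional curvature, and the results of Huang and Li \cite{HL} become available.

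Next I would invoke the warped product decomposition. By Theorem 5.1 of \cite{CMMR} (cf.\ \cite{CSZ,Wang}), on the open set where $\nabla f\neq 0$ the metric splits as $g=dt^{2}+\psi(t)^{2}\bar g$ with $f=f(t)$ and with fiber $(\bar N^{\,n-1},\bar g)$ of constant scalar curvature; positivity of the sectional curvature of $g$ moreover controls the critical set of $f$ (at most two ``poles''), so that this local model glues to a global rotationally symmetric metric once the fiber is shown to be a round sphere --- this last gluing step is the one carried out in \cite{DaskalopoulosSesum,HL}.

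It then remains to identify the fiber in each case. For part (b), $n\geq 5$ and $W=0$ means $(M^{n},g)$ is locally conformally flat, which forces $\bar g$ to have constant sectional curvature; since the sectional curvature of a fiber $2$-plane of $g$ equals $(\bar K-(\psi')^{2})/\psi^{2}$, positivity of the sectional curvature of $g$ gives $\bar K>0$, and completeness of the (compact) fiber then yields $\bar N^{\,n-1}=S^{n-1}$ with its round metric; hence $g$ is rotationally symmetric. For part (a), $n=3$, the Weyl tensor vanishes automatically, and the fiber $\bar N^{2}$ is a surface of constant scalar curvature, hence of constant Gauss curvature, i.e.\ a space form; the same curvature identity gives $\bar K>0$, so $\bar N^{2}=S^{2}$ and $g$ is again rotationally symmetric.

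The real work behind this corollary is Theorem \ref{theorem3}: once $\mu$ is known to be constant the statement is precisely the theorem of \cite{HL}, so I do not expect a serious obstacle here. The only point worth double-checking is that in dimension three one need not separately assume vanishing of the Cotton tensor --- local conformal flatness is not an extra hypothesis in case (a) --- and this is automatic, since a warped product over an interval whose fiber is a surface of constant curvature is locally conformally flat.
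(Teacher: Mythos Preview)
Your proposal is correct and matches the paper's approach exactly: the paper presents this theorem simply as a consequence of Theorem~\ref{theorem3}, the point being that once $\mu$ is shown to be constant on the connected manifold $M$, equation~(\ref{eq0}) is literally the quasi Yamabe gradient soliton equation and the result of Huang--Li~\cite{HL} applies verbatim. Your additional sketch of the warped-product and fiber-identification steps from~\cite{HL} is accurate and more detailed than what the paper itself provides, which only cites~\cite{HL} without reproducing the argument.
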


\begin{theorem}\cite{Leandro}
Let $(M^{4},\,g)$ be a nontrivial complete connected half locally conformally flat generalized quasi Yamabe gradient soliton satisfying (\ref{eq0}) with positive sectional curvature. Then, $M^{4}$ is rotationally symmetric.
\end{theorem}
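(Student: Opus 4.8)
The plan is to reduce the statement to the locally conformally flat case and then invoke Theorem~\ref{theorem3} together with the rotational symmetry results already available for quasi Yamabe gradient solitons. Since $M^{4}$ is connected, Theorem~\ref{theorem3} gives that $\mu$ is a constant, so \eqref{eq0} is exactly the equation of a quasi Yamabe gradient soliton; putting $u=e^{-\mu f}$ when $\mu\neq0$ (and $u=f$ when $\mu=0$), one checks directly from \eqref{eq1} that
\[
\nabla^{2}u=-\mu(R-\lambda)u\,g ,
\]
so $\nabla u$ is a nontrivial closed conformal vector field. By Theorem~5.1 of \cite{CMMR} (see also \cite{CSZ}), on the open set $\Omega=\{\nabla f\neq0\}$, which is dense because $f$ is nonconstant, the metric is a warped product over an interval, $g=ds^{2}+\psi(s)^{2}\bar g$, with $(\Sigma^{3},\bar g)$ a level set of $f$.

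The heart of the argument is to promote the half locally conformally flat hypothesis to full conformal flatness. My plan here is to exploit that a warped product over a one–dimensional base carries no preferred orientation on its fibres: at a point of $\Omega$, in an adapted orthonormal frame $\{e_{0}=\partial_{s},e_{1},e_{2},e_{3}\}$, every nonzero component of the Riemann tensor of $ds^{2}+\psi^{2}\bar g$ involves an even number of fibre indices, so the linear isometry $\phi$ of $T_{p}M^{4}$ fixing $e_{0}$ and acting as $-\mathrm{id}$ on $\mathrm{span}\{e_{1},e_{2},e_{3}\}$ commutes with the curvature operator. Since $\phi$ is orientation reversing it interchanges $\Lambda^{+}$ and $\Lambda^{-}$, hence it conjugates $W^{+}$ to $W^{-}$, and therefore $|W^{+}|=|W^{-}|$ pointwise on $\Omega$. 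Thus the half locally conformally flat assumption, say $W^{+}\equiv0$, forces $W^{-}\equiv0$, so $W\equiv0$ on $\Omega$ and, by density and continuity, on all of $M^{4}$; equivalently, since $\dim\Sigma=3$, the fibre $(\Sigma^{3},\bar g)$ has constant sectional curvature. (The same conclusion should follow, more laboriously, by writing the Weyl tensor of $ds^{2}+\psi^{2}\bar g$ explicitly in terms of $\psi,\psi',\psi''$ and the trace–free Ricci tensor of $\bar g$ and checking that $W^{+}=0$ annihilates the latter.)

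It remains to conclude. Now $(M^{4},g)$ is a nontrivial complete locally conformally flat quasi Yamabe gradient soliton with positive sectional curvature, so I would invoke the locally conformally flat analysis of \cite{HL} (whose argument applies verbatim once $W\equiv0$, in any dimension $n\geq3$ and in particular $n=4$; compare \cite{DaskalopoulosSesum,CSZ}): substituting the warped product into \eqref{eq1} yields an ODE for $\psi$, the fibre $\Sigma^{3}$ is a space form of positive curvature, and completeness together with positivity of the curvature force $(\Sigma^{3},\bar g)$ to be the round $S^{3}$ and $g$ to be an $O(4)$–invariant metric on $\mathbb{R}^{4}$ or on $S^{4}$; that is, $M^{4}$ is rotationally symmetric. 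The main obstacle I anticipate is the second paragraph: making precise the invariance of the warped–product curvature under $\phi$ (or carrying out the explicit Weyl computation), and, above all, controlling the behaviour along $\{\nabla f=0\}$, where the warped product description degenerates and one must argue by density and unique continuation to propagate $W\equiv0$ to all of $M^{4}$.
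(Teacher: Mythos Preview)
Your reduction via Theorem~\ref{theorem3} to a quasi Yamabe gradient soliton is exactly what the paper does; at that point the paper simply invokes \cite{Leandro} as a black box and says nothing further. So on the level of what this paper actually proves, your first paragraph already matches it.

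What you add in the second paragraph is a genuine argument for the step that the paper outsources. Your observation is correct and clean: for a warped product over a one–dimensional base the only nonzero curvature components in an adapted frame carry an even number of fibre indices, so the linear isometry $\phi$ fixing $\partial_{s}$ and negating the fibre directions preserves the Riemann (hence Weyl) tensor pointwise; since $\det\phi=-1$, the induced map on $\Lambda^{2}$ intertwines $\Lambda^{+}$ and $\Lambda^{-}$, and conjugates $W^{+}$ to $W^{-}$, giving $|W^{+}|=|W^{-}|$ and hence $W\equiv0$ from the half hypothesis. This is a more conceptual route than writing out the Weyl components of the warped product (which the paper does in (\ref{29})--(\ref{31}) for the proof of Theorem~\ref{theorem4}) and it immediately reduces you to the locally conformally flat case treated in \cite{HL,DaskalopoulosSesum,CSZ}.

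Your stated worry about the set $\{\nabla f=0\}$ is not a real obstruction here. After the change $u=e^{-\mu f}$ you have $\nabla^{2}u=-\mu(R-\lambda)u\,g$, so $\nabla u$ is a nontrivial closed conformal field; its zero set is discrete, hence $\{\nabla f\neq0\}$ is open and dense, and $W\equiv0$ extends to all of $M^{4}$ by continuity. With that settled, your plan is complete and correct; it differs from the paper only in that you supply the content of \cite{Leandro} rather than citing it.
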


\begin{theorem}\cite{HL}
Let $(M^{n},\,g)$, $n\geq3$, be a nontrivial compact connected generalized quasi Yamabe gradient soliton satisfying (\ref{eq0}). Then, the scalar curvature $R$ of the metric $g$ is constant.
\end{theorem}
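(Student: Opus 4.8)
The plan rests on the fact that, by Theorem~\ref{theorem3} and connectedness of $M$, the function $\mu$ is a constant; this is the input that makes equation~(\ref{eq0}) tractable. With $\mu$ constant I will construct a strictly positive weight $w$ on $M$ together with two integral identities,
\[
\int_{M}w(R-\lambda)\,dV=0\qquad\text{and}\qquad\int_{M}w(R-\lambda)R\,dV=0,
\]
and subtracting $\lambda$ times the first from the second will give $\int_{M}w(R-\lambda)^{2}\,dV=0$, whence $R\equiv\lambda$ by positivity of $w$. It is convenient to run the argument in the two cases $\mu=0$ and $\mu\neq0$ in parallel.

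First I introduce a potential whose Hessian is pointwise proportional to $g$. If $\mu\neq0$, put $u=e^{-\mu f}>0$; differentiating twice and using (\ref{eq0}) with $\mu$ constant gives $\nabla^{2}u=-\mu(R-\lambda)u\,g$. If $\mu=0$, then (\ref{eq0}) already has the form $\nabla^{2}f=(R-\lambda)g$, so I take $u=f$. In either case $\nabla^{2}u=\rho\,g$ for a smooth function $\rho$ (namely $\rho=-\mu(R-\lambda)u$, resp. $\rho=R-\lambda$), and tracing gives $\Delta f-\mu|\nabla f|^{2}=n(R-\lambda)$ as well as $\Delta u=n\rho$. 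The weight is $w=e^{-\mu f}$ (which is just $w\equiv1$ when $\mu=0$). For the first identity, multiply the trace relation $n(R-\lambda)=\Delta f-\mu|\nabla f|^{2}$ by $e^{-\mu f}$, note that $(\Delta f-\mu|\nabla f|^{2})e^{-\mu f}=\operatorname{div}(e^{-\mu f}\nabla f)$, and integrate over the compact manifold $M$ to obtain $\int_{M}(R-\lambda)e^{-\mu f}\,dV=0$.

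For the second identity, start from $\nabla^{2}u=\rho g$. The commutation formula $\operatorname{div}(\nabla^{2}u)=d(\Delta u)+Ric(\nabla u,\cdot)$, combined with $\operatorname{div}(\rho g)=d\rho$ and $\Delta u=n\rho$, gives $Ric(\nabla u,\cdot)=-(n-1)\,d\rho$. Taking the divergence of this and using the contracted second Bianchi identity $\nabla^{i}R_{ij}=\tfrac12\nabla_{j}R$ together with $\langle Ric,\nabla^{2}u\rangle=\rho R$ (which holds because $\nabla^{2}u=\rho g$) yields the pointwise identity
\[
\tfrac12\langle\nabla R,\nabla u\rangle+\rho R=-(n-1)\,\Delta\rho .
\]
Integrating over $M$ kills the right-hand side; writing $\rho$ explicitly in terms of $R$ and $u$, using $\Delta u=n\rho$, and integrating $\int_{M}\langle\nabla R,\nabla u\rangle$ by parts into $-\int_{M}R\,\Delta u$, one is left with $c\bigl(\tfrac n2-1\bigr)\int_{M}w(R-\lambda)R\,dV=0$ for a nonzero constant $c$ ($c=\mu$ if $\mu\neq0$, $c=-1$ if $\mu=0$). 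Since $n\geq3$ we have $\tfrac n2-1>0$, so $\int_{M}w(R-\lambda)R\,dV=0$; combined with the first identity this gives $\int_{M}w(R-\lambda)^{2}\,dV=0$, and hence $R\equiv\lambda$.

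The main obstacle is the curvature bookkeeping in the last paragraph: one must carry the commutation formula and the contracted Bianchi identity through carefully enough to confirm that the coefficient of $\int_{M}w(R-\lambda)R\,dV$ is a nonzero multiple of $\tfrac n2-1$, since this is exactly where the restriction $n\geq3$ is used and where a sign error would collapse the argument. The two structural inputs that make the computation close are the reduction to constant $\mu$ (Theorem~\ref{theorem3}) and the resulting relation $\nabla^{2}u=\rho g$; compactness of $M$ is needed only to discard total divergences and Laplacians.
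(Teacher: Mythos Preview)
Your argument is correct. After invoking Theorem~\ref{theorem3} to make $\mu$ constant, the substitution $u=e^{-\mu f}$ (or $u=f$ when $\mu=0$) gives $\nabla^{2}u=\rho g$, and your two integral identities follow exactly as you claim: the first from $n(R-\lambda)e^{-\mu f}=\operatorname{div}(e^{-\mu f}\nabla f)$, the second from the chain $\operatorname{div}(\nabla^{2}u)=d(\Delta u)+Ric(\nabla u,\cdot)\Rightarrow Ric(\nabla u,\cdot)=-(n-1)d\rho\Rightarrow \tfrac12\langle\nabla R,\nabla u\rangle+\rho R=-(n-1)\Delta\rho$, then integration by parts. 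The coefficient $(1-\tfrac{n}{2})$ is indeed what survives, so $n\geq3$ is used precisely where you say.

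The paper, however, does not give its own proof of this statement. It records the theorem as a corollary: Theorem~\ref{theorem3} forces $\mu$ to be constant, so the generalized quasi Yamabe soliton is in fact a quasi Yamabe gradient soliton in the sense of Huang--Li, and the conclusion is then quoted from \cite{HL}. Your write-up follows the same first step (reduction to constant $\mu$) but replaces the citation with a self-contained integral argument. What you gain is independence from \cite{HL} and a transparent explanation of why compactness and $n\geq3$ are needed; what the paper's route gains is brevity, since once $\mu$ is constant the result is already in the literature.
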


From Theorem \ref{theorem3}, we show that a nontrivial complete connected generalized quasi Yamabe gradient soliton admits a warped product structure (see Proposition \ref{prop1}). In the special case when $(M^{n}, g)$ is locally conformally flat, we can say more
about the warped product structure (see \cite{CSZ,CheegerColding,DaskalopoulosSesum,HL,Wang}).

\begin{theorem}\label{theorem4}
Let $(M^{n},\,g)$, $n\geq3$, be a nontrivial complete connected generalized quasi Yamabe gradient soliton satisfying (\ref{eq0}). Suppose $f$ has no critical point and is locally conformally flat, then $(M^{n}, g)$ is the warped product
\begin{eqnarray*}
(\mathbb{R}, dr^{2})\times_{|\nabla u|}(N^{n-1}, \bar{g}_{N})
\end{eqnarray*}
where $u= e^{-\mu f}$, and $(N^{n-1}, \bar{g})$ is a space of constant sectional curvature.
\end{theorem}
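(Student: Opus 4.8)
The plan is to rewrite (\ref{eq0}) as a classical concircular (conformal–Hessian) equation for the function $u=e^{-\mu f}$, deduce the warped product structure from it, and then invoke local conformal flatness to identify the fibre. By \thmref{theorem3} the function $\mu$ is a constant; we may assume $\mu\neq0$, since for $\mu=0$ equation (\ref{eq0}) becomes the gradient Yamabe soliton equation and the conclusion, with $u$ replaced by $f$, is the theorem of Cao--Sun--Zhang \cite{CSZ}. Set $u=e^{-\mu f}$. Since $\mu$ is constant, $\nabla u=-\mu u\,\nabla f$, and a direct computation gives
\begin{eqnarray*}
\nabla^{2}u=-\mu u\big(\nabla^{2}f-\mu\, df\otimes df\big)=-\mu(R-\lambda)u\,g,
\end{eqnarray*}
so $u$ satisfies $\nabla^{2}u=\rho\,g$ with $\rho:=-\mu(R-\lambda)u$. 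Because $f$ has no critical point and $u$ never vanishes, $\nabla u$ is nowhere zero on $M$.

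Next I would run the standard structure argument for a solution of $\nabla^{2}u=\rho\,g$ with $\nabla u\neq0$, which is exactly what underlies Proposition~\ref{prop1} (compare Theorem~5.1 of \cite{CMMR} and \cite{CSZ}). From $\nabla|\nabla u|^{2}=2\,\nabla^{2}u(\nabla u,\cdot)=2\rho\,du$ one gets that $|\nabla u|$ is constant on the connected components of the level sets of $u$; from $\nabla_{\nabla u}\nabla u=\rho\,\nabla u$ the unit field $e:=\nabla u/|\nabla u|$ is geodesic; and $\nabla^{2}u=\rho\,g$ forces the level sets to be totally umbilical, with second fundamental form $(\rho/|\nabla u|)$ times the induced metric. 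Parametrising $M$ by arclength $r$ along the flow of $e$ (a closed conformal field, as in \cite{CSZ,CMMR}), one obtains a local splitting $g=dr^{2}+\varphi(r)^{2}\,\bar g_{N}$ with $u=u(r)$, $u'=|\nabla u|$ and $\nabla^{2}u(\partial_{r},\partial_{r})=u''=\rho$; comparing $\nabla^{2}u(Y,Y)=\rho\,|Y|^{2}$ for $Y$ tangent to the fibre with the warped product identity $\nabla^{2}u(Y,Y)=(\varphi'/\varphi)\,|\nabla u|\,|Y|^{2}$ gives $\varphi'/\varphi=\rho/|\nabla u|=(\log|\nabla u|)'$, hence $\varphi=c\,|\nabla u|$, and after rescaling $\bar g_{N}$ we may take $\varphi=|\nabla u|$. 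Since $e$ has unit length its flow is complete on the complete manifold $M$, and $u$ is strictly increasing along each flow line, so the flow lines do not close up and $r$ ranges over all of $\mathbb{R}$; this yields the global splitting $(\mathbb{R},dr^{2})\times_{|\nabla u|}(N^{n-1},\bar g_{N})$. I also record that $\rho=u''$ is a function of $r$ alone, hence so is $R=\lambda-\rho/(\mu u)$.

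It remains to show that local conformal flatness forces $(N^{n-1},\bar g_{N})$ to have constant sectional curvature; for a warped product over a one-dimensional base this is the expected behaviour. I would obtain it by comparing, for tangential indices, the locally conformally flat expression for the curvature tensor of $M$ (in terms of $Ric$, $R$ and $g$) with the warped-product formulas for the curvature, Ricci and scalar curvature, which involve only the curvature and Ricci tensors of the fibre together with functions of $r$; since $R$ depends only on $r$ and, as one sees by differentiating $\nabla^{2}u=\rho\,g$, $\nabla u$ is an eigenvector of $Ric$, this comparison reduces to an identity on $N$ alone. For $n\geq4$ the mismatch of numerical coefficients between that identity and the curvature decomposition of the fibre forces the trace-free Ricci tensor of $(N,\bar g_{N})$ to vanish, while vanishing of the Weyl tensor of $M$ forces that of $N$; hence $(N,\bar g_{N})$ is Einstein with zero Weyl tensor, i.e. has constant curvature. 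For $n=3$ the Weyl tensor vanishes identically, and one uses instead the vanishing of the Cotton tensor of $M$, whose tangential component in these coordinates is a nonzero multiple of the intrinsic gradient of the Gauss curvature of the fibre; its vanishing gives that $N$ has constant curvature as well.

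I expect the third step to be the main obstacle: one must check that the curvature information obtained on the individual level sets patches into a single constant-curvature model fibre (the $r$-dependence being precisely what the warping function $|\nabla u(r)|$ carries), and the three-dimensional case must be treated separately with the Cotton tensor rather than the Weyl tensor. By contrast, the first two steps are essentially bookkeeping once \thmref{theorem3} provides that $\mu$ is constant.
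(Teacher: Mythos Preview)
Your approach is correct and essentially the same as the paper's: rewrite (\ref{eq0}) as $\nabla^{2}u=\rho\,g$ for $u=e^{-\mu f}$, deduce the warped product structure (this is the content of Proposition~\ref{prop1}), and then use the Weyl tensor formulas for a warped product over a one-dimensional base (the paper quotes them directly from \cite{besse,O'neil}) to read off that $N$ is Einstein with $\bar W=0$. The paper sidesteps your separate Cotton-tensor argument for $n=3$ by first using the warped-product scalar curvature relation $R=\phi^{-2}\bar R-(n-1)(n-2)(\phi'/\phi)^{2}-2(n-1)\phi''/\phi$ together with $R=R(r)$ to show that $\bar R$ is constant; for a two-dimensional fibre this already yields constant curvature, so no special low-dimensional treatment is needed.
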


Therefore, when $\mu$ is constant on equation (\ref{eq0}), from the above theorems we also have a classification to the gradient Yamabe solitons.

\section{Proof of Theorem \ref{theorem3}}

In this section we first recall some basic facts on tensors that will be useful in the proof of our main results.  We then prove our Theorem \ref{theorem3}. For operators $S,T:\mathcal{H} \to \mathcal{H}$ defined over an $n$-dimensional Hilbert space $\mathcal{H}$, the {\it Hilbert-Schmidt inner product} is defined according to
\begin{equation}
\langle S,T \rangle =\rm tr\big(ST^{\star}\big),
\label{inner}
\end{equation}where $\rm tr$ and $\star$ denote, respectively, the {\it trace} and the {\it adjoint} operation.

For a Riemannian manifold $(M^{n},\,g),$ $n\geq 3,$ the {\it Weyl tensor} $W$ is defined by the following decomposition formula

\begin{eqnarray}\label{weyl}
R_{ijkl}&=&W_{ijkl}+\frac{1}{n-2}\big(R_{ik}g_{jl}+R_{jl}g_{ik}-R_{il}g_{jk}-R_{jk}g_{il}\big)\nonumber\\
&&-\frac{R}{(n-1)(n-2)}\big(g_{jl}g_{ik}-g_{il}g_{jk}\big),
\end{eqnarray} where $R_{ijkl}$ stands for the Riemannian curvature operator. In \cite{CC}, Cao and Chen introduced a covariant 3-tensor $D$ given by
\begin{eqnarray}\label{cotton}
D_{ijk}&=&\frac{1}{n-2}(R_{jk}\nabla_{i}f-R_{ik}\nabla_{j}f)+\frac{1}{(n-1)(n-2)}(R_{il}\nabla^{l}fg_{jk}-R_{jl}\nabla^{l}fg_{ik})\nonumber\\
&-&\frac{R}{(n-1)(n-2)}(\nabla_{i}fg_{jk}-\nabla_{j}fg_{ik}).
\end{eqnarray} 
The tensor $D$ is skew-symmetric in its first two indices and trace-free, i.e.,
\begin{eqnarray*}
D_{ijk} = -D_{jik}\quad\mbox{and}\quad g^{ij}D_{ijk} = g^{ik}D_{ijk} = g^{jk}D_{ijk} = 0.
\end{eqnarray*}

We will show how these two tensors are related.

In order to set the stage for the proof that follows let us recall some equations for any dimension. Moreover, since 
\begin{eqnarray*}
\nabla_{i}|\nabla f|^{2}=2\nabla_{i}\nabla_{j}f\nabla^{j}f,\quad|\nabla f|^{2}= g^{ij}\nabla_{i}f\nabla_{j}f\quad\mbox{and}\quad\Delta f=g^{ij}\nabla_{i}\nabla_{j}f
\end{eqnarray*}
the trace of (\ref{eq1}) is given by
\begin{eqnarray}\label{eq2}
\Delta f-\mu|\nabla f|^{2}=n(R-\lambda)
\end{eqnarray}
and
\begin{eqnarray}\label{eq3}
(R-\lambda)\nabla_{i}f=\frac{1}{2}\nabla_{i}|\nabla f|^{2}-\mu|\nabla f|^{2}\nabla_{i}f.
\end{eqnarray} Taking the covariant derivative of (\ref{eq2}) we get
\begin{eqnarray}\label{eq4}
n\nabla_{i}R=\nabla_{i}(\Delta f)-(\nabla_{i}\mu|\nabla f|^{2}+ \mu\nabla_{i}|\nabla f|^{2}).
\end{eqnarray}

Now, taking the covariant derivative in (\ref{eq1}) we get
\begin{eqnarray}\label{eq20}
\nabla_{i}Rg_{jk}=\nabla_{i}\nabla_{j}\nabla_{k}f - [\nabla_{i}\mu\nabla_{j}f\nabla_{k}f +\mu(\nabla_{i}\nabla_{j}f\nabla_{k}f+ \nabla_{j}f\nabla_{i}\nabla_{k}f)]
\end{eqnarray}
Contracting (\ref{eq20}) over $i$ and $k$, and using the Ricci equation we obtain
\begin{eqnarray*}
\nabla_{j}R=R_{jl}\nabla^{l}f+\nabla_{j}(\Delta f)-\left[g^{ik}\nabla_{i}\mu\nabla_{k}f\nabla_{j}f+\mu\left(\frac{1}{2}\nabla_{j}|\nabla f|^{2}+\Delta f\nabla_{j}f\right)\right]
\end{eqnarray*}
From (\ref{eq2}) and (\ref{eq4}) and the above equation one has
\begin{eqnarray*}
\nabla_{j}R&=&R_{jl}\nabla^{l}f+n\nabla_{j}R+\nabla_{j}\mu|\nabla f|^{2}+\frac{\mu}{2}\nabla_{j}|\nabla f|^{2}\\
&-&g^{ik}\nabla_{i}\mu\nabla_{k}f\nabla_{j}f - n\mu(R-\lambda)\nabla_{j}f-\mu^{2}|\nabla f|^{2}\nabla_{j}f
\end{eqnarray*}
Then, from (\ref{eq3}) we can infer
\begin{eqnarray}\label{eq13}
(n-1)\nabla_{j}R&=& -R_{jl}\nabla^{l}f -|\nabla f|^{2}\nabla_{j}\mu \nonumber\\
&+&[g^{ik}\nabla_{i}\mu\nabla_{k}f+\mu(n-1)(R-\lambda)]\nabla_{j}f.
\end{eqnarray}

\begin{lemma}\label{lemm1}
Let $(M^n, g)$ be an n-dimensional generalized quasi Yamabe gradient soliton satisfying (\ref{eq1}). Then we have:
\begin{eqnarray*}
W_{ijkl}\nabla^{l}f = D_{ijk} + \left(\frac{n-2}{n-1}\right)(\nabla_{i}\mu\nabla_{j}f\nabla_{k}f-\nabla_{j}\mu\nabla_{i}f\nabla_{k}f)
-\left(\frac{|\nabla f|^{2}}{n-1}\right)(\nabla_{i}\mu g_{jk}-\nabla_{j}\mu g_{ik}).
\end{eqnarray*} where $D_{ijk}$ is defined from (\ref{cotton}).
\end{lemma}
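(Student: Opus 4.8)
The plan is to compute $R_{ijkl}\nabla^{l}f$ from the commutation formula for third covariant derivatives of $f$, and then substitute the Weyl decomposition (\ref{weyl}) to isolate $W_{ijkl}\nabla^{l}f$. First I would start from (\ref{eq20}), which expresses $\nabla_{i}\nabla_{j}\nabla_{k}f$ in terms of $\nabla_{i}R\, g_{jk}$ and lower-order terms in $f$ and $\mu$. Antisymmetrizing (\ref{eq20}) in the indices $i$ and $j$ and invoking the Ricci identity
\begin{eqnarray*}
\nabla_{i}\nabla_{j}\nabla_{k}f-\nabla_{j}\nabla_{i}\nabla_{k}f = -R_{ijkl}\nabla^{l}f,
\end{eqnarray*}
the third-derivative terms collapse and one is left with
\begin{eqnarray*}
R_{ijkl}\nabla^{l}f = \nabla_{j}R\, g_{ik}-\nabla_{i}R\, g_{jk} + \nabla_{i}\mu\nabla_{j}f\nabla_{k}f - \nabla_{j}\mu\nabla_{i}f\nabla_{k}f,
\end{eqnarray*}
the $\mu(\nabla\nabla f\,\nabla f)$ terms that are symmetric in $i,j$ canceling as well. (Here I should double-check the exact placement of the metric contractions; the symmetric-in-$ij$ pieces of (\ref{eq20}) drop out, leaving only the genuinely antisymmetric remainder.)

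Next I would contract the Weyl decomposition (\ref{weyl}) with $\nabla^{l}f$, which yields $R_{ijkl}\nabla^{l}f$ expressed as $W_{ijkl}\nabla^{l}f$ plus a combination of $R_{ik}\nabla_{j}f$, $R_{jk}\nabla_{i}f$, $R_{il}\nabla^{l}f\, g_{jk}$, $R_{jl}\nabla^{l}f\, g_{ik}$, and the scalar-curvature terms with the metric — exactly the building blocks appearing in the definition (\ref{cotton}) of $D_{ijk}$, up to the factor $(n-2)^{-1}$ and $[(n-1)(n-2)]^{-1}$ normalizations. Equating the two expressions for $R_{ijkl}\nabla^{l}f$ gives $W_{ijkl}\nabla^{l}f$ in terms of $D_{ijk}$, the antisymmetrized $\mu$-terms, and the remaining $\nabla_{j}R\, g_{ik}-\nabla_{i}R\, g_{jk}$ piece.

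The key step — and the main bookkeeping obstacle — is to eliminate the $\nabla R$ terms using the contracted identity (\ref{eq13}), which expresses $(n-1)\nabla_{j}R$ in terms of $R_{jl}\nabla^{l}f$, $|\nabla f|^{2}\nabla_{j}\mu$, and a multiple of $\nabla_{j}f$. Substituting this in, the Ricci-times-$\nabla f$ terms reorganize to complete the remaining pieces of $D_{ijk}$ in (\ref{cotton}) (in particular the $R_{il}\nabla^{l}f\, g_{jk}$ and scalar-curvature-times-metric terms), the terms proportional to $\nabla_{i}f\, g_{jk}-\nabla_{j}f\, g_{ik}$ cancel because $D$ already absorbs them, and the $\mu$-terms collect into precisely $\left(\frac{n-2}{n-1}\right)(\nabla_{i}\mu\nabla_{j}f\nabla_{k}f-\nabla_{j}\mu\nabla_{i}f\nabla_{k}f)-\left(\frac{|\nabla f|^{2}}{n-1}\right)(\nabla_{i}\mu g_{jk}-\nabla_{j}\mu g_{ik})$. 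I expect the delicate part to be tracking the several $\frac{1}{n-1}$, $\frac{1}{n-2}$, and $\frac{n-2}{n-1}$ factors correctly through the contraction and substitution, rather than any conceptual difficulty; the structure of the computation is forced once one knows $D_{ijk}$ is designed to be the "potential" contraction of the curvature against $\nabla f$.
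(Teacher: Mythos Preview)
Your approach is essentially the paper's: antisymmetrize the covariant derivative of the soliton equation, invoke the Ricci identity, replace $R_{ijkl}$ by the Weyl decomposition, and then eliminate $\nabla R$ using (\ref{eq13}). That is exactly how the paper proceeds (its equations (\ref{eq60}) and (\ref{eq50})).

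One concrete slip to fix: the $\mu(\nabla\nabla f\,\nabla f)$ terms in (\ref{eq20}) do \emph{not} all cancel upon antisymmetrization in $i,j$. The piece $\mu\,\nabla_{i}\nabla_{j}f\,\nabla_{k}f$ is symmetric in $i,j$ and drops out, but $\mu\,\nabla_{j}f\,\nabla_{i}\nabla_{k}f$ is not, and its antisymmetrization survives as $\mu(\nabla_{i}f\,\nabla_{j}\nabla_{k}f-\nabla_{j}f\,\nabla_{i}\nabla_{k}f)$. Substituting (\ref{eq1}) for the Hessians turns this into $\mu(R-\lambda)(\nabla_{i}f\,g_{jk}-\nabla_{j}f\,g_{ik})$, so your displayed formula for $R_{ijkl}\nabla^{l}f$ is missing exactly this term. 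It matters: the same $\mu(R-\lambda)(\nabla_{i}f\,g_{jk}-\nabla_{j}f\,g_{ik})$ appears when you expand $\nabla_{i}R\,g_{jk}-\nabla_{j}R\,g_{ik}$ via (\ref{eq13}), and the two occurrences cancel. Without it you would end up with a spurious $\mu(R-\lambda)$ term that fits into neither $D_{ijk}$ nor the $\nabla\mu$ pieces of the claimed identity. Once you restore that term, the rest of your bookkeeping goes through as you describe.
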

\begin{proof} We may use equation (\ref{eq1}) to obtain
\begin{eqnarray*}
\nabla_{i}Rg_{jk}-\nabla_{j}Rg_{ik}&=&\nabla_{i}\nabla_{j}\nabla_{k}f-\nabla_{j}\nabla_{i}\nabla_{k}f
+\mu(\nabla_{i}f\nabla_{j}\nabla_{k}f-\nabla_{j}f\nabla_{i}\nabla_{k}f)\\
&+&(\nabla_{j}\mu\nabla_{i}f\nabla_{k}f-\nabla_{i}\mu\nabla_{j}f\nabla_{k}f).
\end{eqnarray*}
Then, by Ricci identity, we get
\begin{eqnarray*}
\nabla_{i}Rg_{jk}-\nabla_{j}Rg_{ik}&=&R_{ijkl}\nabla^{l}f+\mu(\nabla_{i}f\nabla_{j}\nabla_{k}f-\nabla_{j}f\nabla_{i}\nabla_{k}f)\\
&+&(\nabla_{j}\mu\nabla_{i}f\nabla_{k}f-\nabla_{i}\mu\nabla_{j}f\nabla_{k}f)
\end{eqnarray*}
Now, from (\ref{eq1}) we have
\begin{eqnarray*}
\nabla_{i}Rg_{jk}-\nabla_{j}Rg_{ik}&=&R_{ijkl}\nabla^{l}f+ \mu(R-\lambda)(\nabla_{i}fg_{jk}-\nabla_{j}fg_{ik})\\
&+&(\nabla_{j}\mu\nabla_{i}f\nabla_{k}f-\nabla_{i}\mu\nabla_{j}f\nabla_{k}f).
\end{eqnarray*}
It then follows from (\ref{weyl}) that
\begin{eqnarray}\label{eq60}
\nabla_{i}Rg_{jk}-\nabla_{j}Rg_{ik}&=&W_{ijkl}\nabla^{l}f+\frac{1}{(n-2)}(R_{ik}\nabla_{j}f-R_{jk}\nabla_{i}f)\nonumber\\
&+&\frac{1}{(n-2)}(R_{jl}\nabla^{l}fg_{ik}-R_{il}\nabla^{l}fg_{jk})-\frac{R}{(n-1)(n-2)}(\nabla_{j}fg_{ik}-\nabla_{i}fg_{jk})\nonumber\\
&+&\mu(R-\lambda)(\nabla_{i}fg_{jk}-\nabla_{j}fg_{ik})+(\nabla_{j}\mu\nabla_{i}f\nabla_{k}f-\nabla_{i}\mu\nabla_{j}f\nabla_{k}f).
\end{eqnarray}

From (\ref{eq13}), we obtain
\begin{eqnarray}\label{eq50}
\nabla_{i}Rg_{jk}-\nabla_{j}Rg_{ik}&=&\frac{1}{(n-1)}(R_{jl}\nabla^{l}fg_{ik}-R_{il}\nabla^{l}fg_{jk})
+\frac{|\nabla f|^{2}}{(n-1)}(\nabla_{j}\mu g_{ik}-\nabla_{i}\mu g_{jk})\nonumber\\
&+&\frac{1}{(n-1)}(\nabla_{j}\mu\nabla_{i}f\nabla_{k}f-\nabla_{i}\mu\nabla_{j}f\nabla_{k}f)
+\mu(R-\lambda)(\nabla_{i}fg_{jk}-\nabla_{j}fg_{ik}).
\end{eqnarray}
Combining (\ref{eq60}) and (\ref{eq50}), we finish the proof of Lemma \ref{lemm1}.
\end{proof}

We define the 3-tensor $E$ as follows
\begin{eqnarray}\label{tensorE}
E_{ijk}&=&  \left(\frac{n-2}{n-1}\right)(\nabla_{i}\mu\nabla_{j}f\nabla_{k}f-\nabla_{j}\mu\nabla_{i}f\nabla_{k}f)\nonumber\\
&-&\left(\frac{|\nabla f|^{2}}{n-1}\right)(\nabla_{i}\mu g_{jk}-\nabla_{j}\mu g_{ik}).
\end{eqnarray}
Taking into account this definition, we deduce from Lemma \ref{lemm1} that
\begin{eqnarray}\label{WDE}
W_{ijkl}\nabla^{l}f=D_{ijk}+E_{ijk}.
\end{eqnarray}

{\it Proof of Theorem \ref{theorem3}.}
Since the Weyl tensor and the $3$-tensor $D$ are trace free, i.e. $g^{jk}W_{ijkl}=g^{jk}D_{ijk}=0$ contracting (\ref{WDE}) over $j$ and $k$, we get
\begin{eqnarray}\label{eq200}
g^{jk}E_{ijk}=0.
\end{eqnarray}
On the other hand, from (\ref{tensorE}) we have
\begin{eqnarray}\label{sytem1}
g^{jk}E_{ijk}=-\frac{|\nabla f|^{2}}{n-1}\nabla_{i}\mu-\left(\frac{n-2}{n-1}\right)g(\nabla\mu, \nabla f)\nabla_{i}f.
\end{eqnarray}
Therefore, from (\ref{eq200}) and (\ref{sytem1}) we get
\begin{eqnarray}\label{eq22}
|\nabla f|^{2}\nabla\mu+(n-2)g(\nabla\mu, \nabla f)\nabla f= 0.
\end{eqnarray}
Whence,
\begin{eqnarray*}
|\nabla f|^{2}|\nabla \mu|^{2}+(n-2)g(\nabla\mu, \nabla f)^{2}=0.
\end{eqnarray*}
Since we have a nontrivial GQY manifold, then $\mu$ is constant on each connected component of $M$.
$\hfill\Box$

\section{The warped product structure}

Following the steps in \cite{CSZ}, we can prove that a GQY manifold admits a warped product structure without any additional hypothesis over $M$. From Theorem \ref{theorem3} by using a conformal change of variable on (\ref{eq0}) ($u=e^{-\mu f}$), we get
\begin{eqnarray}\label{ge1111}
\mu u(R-\lambda)g=\nabla^{2}u.
\end{eqnarray} 
Cheeger and Colding \cite{CheegerColding} characterized the warped product structure of (\ref{ge1111}). We will sketch the proof of such warped product structure here for completeness.

Consider the level surface $\Sigma=f^{-1}(c)$ where $c$ is any regular value of the potential function $f$. Suppose that $I$ is an open interval containing $c$ such that $f$ has no critical point. Let $U_{I}=f^{-1}(I)$. Fix a local coordinates system
$$(x_{1}, x_{2},\cdots, x_{n}) = (r, \theta_{2},\cdots, \theta_{n}) $$
in $U_{I}$, where $(\theta_{2},\cdots,\theta_{n})$ is any local coordinates system on the level surface $\Sigma_{c}$,
and indices $a, b, c,\cdots$ range from $2$ to $n$. Then we can express the metric $g$ as
\begin{eqnarray*}
ds^{2} = \frac{1}{|\nabla f|^{2}}df^{2} + g_{ab}(f,\theta)d\theta_{a}d\theta_{b},
\end{eqnarray*}
where $g_{ab}(f, \theta)d\theta_{a}d\theta_{b}$ is the induced metric and $\theta=(\theta_{a},\cdots,\theta_{n})$ is any local coordinates system on $\Sigma_{c}$. From (\ref{eq3})
\begin{eqnarray*}
\frac{1}{2}\nabla_{a}|\nabla f|^{2}=[(R-\lambda)+\mu|\nabla f|^{2}]\nabla_{a}f=0.
\end{eqnarray*}
Since $|\nabla f|^{2}$ is constant on $\Sigma_{c}$, we can make a change of variable
\begin{eqnarray*}
r(x)=\int\frac{df}{|\nabla f|}
\end{eqnarray*}
so that we can express the metric $g$ in $U_{I}$ as 
\begin{eqnarray*}
ds^{2}=dr^{2}+g_{ab}(r,\theta)d\theta_{a}d\theta_{b}.
\end{eqnarray*}
 Let $\nabla r=\frac{\partial}{\partial r}$, then $|\nabla r|=1$ and $\nabla f=f'(r)\frac{\partial}{\partial r}$ on $U_{I}$. Then, 
\begin{eqnarray}\label{eq333}
\nabla_{\partial r}\partial r=0.
\end{eqnarray}

Now, by (\ref{eq333}) and (\ref{eq0}), it follows that
\begin{eqnarray}\label{eq334}
(R-\lambda)=\nabla^{2}f(\partial r,\partial r)-\mu(df\otimes df)(\partial r, \partial r)=f''(r)-\mu(f'(r))^{2}.
\end{eqnarray}
Whence, from Theorem \ref{theorem3} and (\ref{eq334}), we can see that $R$ is also constant on $\Sigma_{c}$. Moreover, since $g(\nabla f, \partial_{a})=0$, from (\ref{eq0}) the second fundamental formula on $\Sigma_{c}$ is given by
\begin{eqnarray}\label{eq555}
h_{ab}=-g(\partial r, \nabla_{a}\partial_{b})=\frac{\nabla_{a}\nabla_{b}f}{|\nabla f|}=\frac{(R-\lambda)}{|\nabla f|}g_{ab}.
\end{eqnarray}
Therefore, from (\ref{eq334}) and (\ref{eq555}) we have
\begin{eqnarray}\label{eq556}
h_{ab}=\frac{f''(r)-\mu(f'(r))^{2}}{f'(r)}g_{ab}.
\end{eqnarray}
From (\ref{eq556}) the mean curvature is given by 
\begin{eqnarray}\label{eq557}
H=(n-1)\frac{f''(r)-\mu(f'(r))^{2}}{f'(r)}
\end{eqnarray}
wich is also constant on $\Sigma_{c}$.

Furthermore, from the second fundamental formula on $\Sigma_{c}$, we have that
\begin{eqnarray}\label{eq558}
h_{ab}=-g(\partial_{r}, \nabla_{a}\partial_{b})=-g(\partial_{r}, \Gamma^{l}_{ab}\partial_{l})=-\Gamma^{1}_{ab}.
\end{eqnarray}
On the other hand,
\begin{eqnarray}\label{eq559}
\Gamma^{1}_{ab}=-\frac{1}{2}g^{11}\frac{\partial}{\partial r}g_{ab}.
\end{eqnarray}
Therefore, from (\ref{eq556}), (\ref{eq558}) and (\ref{eq559}) we get
\begin{eqnarray}\label{eq560}
2\frac{f''(r)-\mu(f'(r))^{2}}{f'(r)}g_{ab}=\frac{\partial}{\partial r}g_{ab}.
\end{eqnarray}
Hence, it follows from (\ref{eq560}) that
\begin{eqnarray*}
g_{ab}(r,\theta)=(f'e^{-\mu f})^{2}g_{ab}(r_{0},\theta),
\end{eqnarray*}
where the level set $\{r=r_{0}\}$ corresponds to $\Sigma_{r_{0}}=f^{-1}(r_{0})$. For any regular value $r_{0}$ of the potential function $f$.

Therefore we can announce the following result analogous to the Proposition 2.1 in \cite{CSZ} (we also recommend \cite{DaskalopoulosSesum, Wang}).

\begin{proposition}\label{prop1}
Let $(M^{n}, g)$ be a nontrivial complete connected generalized quasi Yamabe gradient Yamabe soliton, satisfying the GQY equation (\ref{eq0}), and let $\Sigma_{c} = f^{-1}(c)$ be a regular level surface. Then
\begin{itemize}
\item[(1)] The scalar curvature $R$ and $|\nabla f|^{2}$ are constants on $\Sigma_{c}$.
\item[(2)]The second fundamental form of $\Sigma_{c}$ is given by
\begin{eqnarray}
h_{ab}=\frac{H}{n-1}g_{ab}.
\end{eqnarray}
\item[(3)] The mean curvature $H=(n-1)\frac{(R-\lambda)}{|\nabla f|}$ is constant on $\Sigma_{c}$.
\item[(4)] In any open neighborhood $U^{\beta}_{\alpha}=f^{-1}\big((\alpha, \beta\big)$ of $\Sigma_{c}$ in which f has no critical
points, the GQY metric $g$ can be expressed as
\begin{eqnarray*}
ds^{2} = dr^{2} + (f'(r)e^{-\mu f})^{2}\bar{g}_{ab} 
\end{eqnarray*}
where $(\theta_{2},\cdots, \theta_{n})$ is any local coordinates system on $\Sigma_{c}$ and $\bar{g}(r,\theta) = g_{ab}(r_{0},\theta)d\theta_{a}d\theta_{b}$ is the induced metric on $\Sigma_{c} = r^{-1}(r_{0})$.
\end{itemize}
\end{proposition}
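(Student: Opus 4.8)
The plan is to run the chain of identities assembled just above the statement, with Theorem~\ref{theorem3} supplying the one essential input (constancy of $\mu$ on each connected component). For item (1), I would contract the consequence (\ref{eq3}) of the soliton equation against a direction $\partial_a$ tangent to $\Sigma_c$; since $\nabla f$ is normal to the level set, $\nabla_a f = 0$, and (\ref{eq3}) collapses to $\nabla_a |\nabla f|^2 = 0$, so $|\nabla f|^2$ is constant on $\Sigma_c$. This constancy is precisely what makes the substitution $r(x) = \int df/|\nabla f|$ well defined on a neighborhood $U_I = f^{-1}(I)$ on which $f$ has no critical point; after it we have $|\nabla r| = 1$, $\nabla f = f'(r)\partial_r$, and $\nabla_{\partial_r}\partial_r = 0$, and the metric takes the form $ds^2 = dr^2 + g_{ab}(r,\theta)\,d\theta_a d\theta_b$.

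Next I would evaluate (\ref{eq0}) on $(\partial_r,\partial_r)$ and use $\nabla_{\partial_r}\partial_r = 0$ to get $R - \lambda = f''(r) - \mu (f'(r))^2$. Because $f$, hence $f'$ and $f''$, depends only on $r$, and $\mu$ is constant on the connected component by Theorem~\ref{theorem3}, the right-hand side is constant on $\Sigma_c$, finishing (1). For (2) and (3) I would compute the second fundamental form directly: using $g(\nabla f,\partial_a)=0$ one has $h_{ab} = -g(\partial_r,\nabla_a\partial_b) = |\nabla f|^{-1}\nabla_a\nabla_b f$, and then (\ref{eq0}) gives $\nabla_a\nabla_b f = (R-\lambda)g_{ab}$, so $h_{ab} = \frac{R-\lambda}{|\nabla f|}g_{ab}$. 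Taking the trace yields $H = (n-1)\frac{R-\lambda}{|\nabla f|}$, constant on $\Sigma_c$ by (1), and in particular $\Sigma_c$ is totally umbilical with $h_{ab} = \frac{H}{n-1}g_{ab}$.

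For the warped-product formula (4), I would combine umbilicity with the Christoffel identity $h_{ab} = -\Gamma^1_{ab} = \frac{1}{2}g^{11}\partial_r g_{ab} = \frac{1}{2}\partial_r g_{ab}$ (using $g_{rr}=1$), which gives the linear ODE $\partial_r g_{ab} = 2\,\frac{f''(r)-\mu(f'(r))^2}{f'(r)}\,g_{ab}$. Since $\frac{d}{dr}\log\!\big(f'(r)e^{-\mu f}\big) = \frac{f''(r)-\mu(f'(r))^2}{f'(r)}$, integrating in $r$ from a fixed regular value $r_0$ gives $g_{ab}(r,\theta) = (f'(r)e^{-\mu f})^2\,g_{ab}(r_0,\theta)$, so on $U^{\beta}_{\alpha}$ the metric is $ds^2 = dr^2 + (f'(r)e^{-\mu f})^2\,\bar g_{ab}$, as claimed. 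This parallels Proposition~2.1 in \cite{CSZ}.

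The computations are routine tensor calculus, so there is no serious obstacle; the genuinely indispensable ingredient is Theorem~\ref{theorem3}, which is what forces $R - \lambda = f''-\mu(f')^2$ to be constant on each level set and what gives the ODE for $g_{ab}$ its clean integrating factor $e^{-\mu f}$. The only point requiring care is that $r$ is defined, and hence the warped-product form valid, only on a neighborhood free of critical points of $f$ — exactly the hypothesis under which (4) is stated.
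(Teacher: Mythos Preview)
Your proposal is correct and follows essentially the same route as the paper: use (\ref{eq3}) to get $|\nabla f|^2$ constant on level sets, introduce the arclength parameter $r$, read off $R-\lambda=f''-\mu(f')^2$ from (\ref{eq0}) evaluated on $(\partial_r,\partial_r)$ together with Theorem~\ref{theorem3}, compute $h_{ab}$ from (\ref{eq0}), and integrate the resulting ODE $\partial_r g_{ab}=2\frac{f''-\mu(f')^2}{f'}g_{ab}$ via the integrating factor $f'e^{-\mu f}$. Your explicit identification of $\frac{d}{dr}\log(f'e^{-\mu f})$ is exactly the computation the paper performs to reach (\ref{eq560}) and its solution.
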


{\it Proof of Theorem \ref{theorem4}.}
Consider the warped product manifold
\begin{eqnarray}\label{product0}
(M^{n}, g)=(I, dr^{2})\times\phi(N^{n-1}, \bar{g}),
\end{eqnarray}
where $ds^{2}=dr^{2}+(\phi)^{2}\bar{g}$. Fix any local coordinates system $\theta = (\theta_{2},\cdots, \theta_{n})$ on
$N^{n-1}$, and choose $(x_{1}, x_{2},\cdots, x_{n}) = (r,\theta_{2},\cdots,\theta_{n})$. Now (see \cite{besse, CSZ, O'neil}) the scalar curvature formulas of $(M^{n}, g)$ and $(N^{n-1}, \bar{g})$ are related by
\begin{eqnarray*}
R=\phi^{-2}\bar{R}-(n-1)(n-2)\left(\frac{\phi'}{\phi}\right)^{2}-2(n-1)\frac{\phi''}{\phi}.
\end{eqnarray*}
Therefore, since $\phi= f'e^{-\mu f}$, from Theorem \ref{theorem3} and Proposition \ref{prop1} we have that $\bar{R}$ does not depend on $\theta$. Then $\bar{R}$ is constant.

Moreover, the Weyl tensor $W$ for an arbitrary warped
product manifold (\ref{product0}) is given by (see \cite{besse, CSZ, O'neil}):
\begin{eqnarray}\label{29}
W_{1a1b}=-\frac{1}{n-2}\bar{R}_{ab}+\frac{\bar{R}}{(n-1)(n-2)}\bar{g}_{ab},
\end{eqnarray}
\begin{eqnarray}\label{30}
W_{1abc}=0,
\end{eqnarray}
and
\begin{eqnarray}\label{31}
W_{abcd}=\phi\bar{W}_{abcd}.
\end{eqnarray}
Where $\bar{W}$ denotes the Weyl tensor of $(N^{n-1},\bar{g})$.
Therefore, since the warped product manifold (\ref{product0}) is locally conformally flat, i.e. $W=0$, from (\ref{29}) and (\ref{31}) we see that $N$ is Einstein and $\bar{W} = 0$. Then, from (\ref{weyl}) we have
\begin{eqnarray*}
\bar{R}_{abcd}=\frac{\bar{R}}{(n-1)(n-2)}(\bar{g}_{bd}\bar{g}_{ac}-\bar{g}_{bd}\bar{g}_{ac}).
\end{eqnarray*}
Since $\bar{R}$ is constant, we get that $\bar{R}_{abcd}$ is also constant. Thus $N$ is a space form.
$\hfill\Box$

\begin{acknowledgement}
%The author was partially supported by grant from CNPq-Brazil.
The first author is grateful to Ernani Ribeiro Jr for bringing the paper \cite{CMMR} to his attention. We want to thank Professor Xia Changyu for the helpful remarks and discussions. %Moreover, the authors want to thank the referee for his careful reading and helpful suggestions.
\end{acknowledgement}

\end{document}